\documentclass{amsart}
\usepackage{graphicx}
\usepackage{FancyMath}

\title{The Heegaard Floer d-invariant for more rational homology spheres}
\author{Isabella Khan}
\date{6 May 2026}

\begin{document}

\begin{abstract}
The Heegaard Floer $d$-invariant for a rational homology sphere $Y$ and spin$^c$-structure $\s$ is defined as the minimal absolute grading of a generator of $HF^+(Y; \s)$. In 2005, N\'emethi used lattice homology to compute the d-invariant for a particular class of negative-definite plumbed rational homology spheres, and conjectured that his formula should hold for all negative-definite plumbed rational homology spheres. In this paper, we use Zemke's isomorphism between lattice and Heegaard Floer homology to prove N\'emethi's conjecture.
\end{abstract}
	\maketitle\blfootnote{The author was supported by the Simons Collaboration for New Structures in Low-dimensional Topology.}

\maketitle

\section{Introduction}

    Heegaard Floer homology is a powerful 3-manifold invariant first constructed by Ozsv\'ath and Sz\'abo in~\cite{OShol1}. To any closed 3-manifold $Y$, the Heegaard Floer package associates a certain type of module $\HF^{\circ}(Y)$; while the type of module varies based on the particular flavor of Heegaard Floer homology which is being computed, each flavor splits over the spin$^c$-structures of $Y$, so that $\HF^{\circ}(Y)$ is a direct sum of submodules $\HF^{\circ}(Y; \s)$ for each $\s \in \spin^c(Y)$. When the spin$^c$-structure $\s$ is torsion, $\HF^{\circ}(Y; \s)$ admits an absolute $\Q$-grading, $\grt$. In this paper, we concentrate on rational homology spheres, on which all spin$^c$-structures are torsion, so that we can define $\grt$ on $\HF^{\circ}(Y; \s)$ for each spin$^c$-structure of $Y$. In these cases, we have a fundamental invariant of the 3-manifold $Y$ called the \emph{Heegaard Floer $d$-invariant}. For each (torsion) $\s \in \spin^c(Y)$, $d(Y, \s)$ is the minimal absolute grading of a non-torsion element of $HF^+(Y;\s)$. We can also define $d(Y, \s)$ as the maximal grading of a non-torsion element of $\HF^-(Y; \s)$, which is the approach we will use in this paper.

    In 2005, N\'emethi used lattice homology to prove in~\cite{Nem05} that for certain rational homology spheres $Y$ -- those which admitted a particular kind of negative-definite plumbing called an \emph{almost rational plumbing} — the $d$-invariant can be computed for each $\s \in \spin^c(Y)$, as:
    \begin{equation}\label{Nem1}
        d(Y, \s) = \max_{k \in \s} \frac{k^2 + |V|}{4}
    \end{equation}
    where $|V|$ is the number of vertices of the plumbing graph, $k$ ranges across the characteristic vectors of the associated 4-manifold $X$ which represent $\s$, and the ``$^2$'' in $k^2$ indicates the intersection form of $X$. See Theorem 8.3 and Proposition 4.7 of~\cite{Nem05} for an exact statement of this result.

    In~\cite{Nem07}, N\'emethi conjectured further that~\eqref{Nem1} should hold for all negative-definite plumbed rational homology spheres. In this paper, we verify N\'emethi's conjecture. The main result is:
    \begin{theorem}\label{main}
        Let $Y$ be a negative-definite plumbed 3-manifold which is a rational homology sphere, and let $[k]$ be a spin$^c$ structure for $Y$. Then
       \begin{equation}\label{main1}
           d(Y, [k]) = \max_{k' \in [k]} \frac{(k')^2 + s}{4},
       \end{equation}  
        where $s$ denotes the number of vertices of the plumbing graph for $Y$, and $(k')^2 = (k')^T M (k')$, where $M$ is the plumbing matrix corresponding to $Y$. 
    \end{theorem}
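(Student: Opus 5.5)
The plan is to pass from Heegaard Floer homology to lattice homology using Zemke's isomorphism, and then compute the top non-torsion grading on the lattice side directly.

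\textbf{Step 1: transfer to lattice homology.} Zemke's theorem gives a grading-preserving isomorphism of $\F[U]$-modules
\[
\HF^-(Y;[k]) \cong \mathbb{HF}^-(G,[k]),
\]
with an explicit grading shift. Here $\mathbb{HF}^-$ is built from the lattice complex $\mathbb{CF}^-(G,[k])$. Its generators are cubes $[K,E]$, with $K \in [k]$ characteristic and $E \subseteq V$. Its weights are
\[
\mathrm{gr}[K,E] = \frac{K^2 + s}{4} - 2\,(\text{power of } U) + \text{correction in } |E|.
\]
I would first fix normalizations. A weight-zero vertex $[K,\emptyset]$ must sit in absolute grading $\frac{K^2+s}{4}$. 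This matches Ozsv\'ath--Szab\'o's grading of the class in $\HF^-$ induced by the cobordism $X$ with characteristic class $K$. Consequently $d(Y,[k])$ equals the maximal grading of a non-$U$-torsion element of $\mathbb{HF}^-(G,[k])$.

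\textbf{Step 2: lower bound.} For each $k' \in [k]$, the vertex $[k',\emptyset]$ is a cycle, or can be completed to one, whose image under $U^N$ is nonzero for all $N$. The argument: after localizing by $U^{-1}$, the lattice homology becomes $\F[U,U^{-1}]$, generated by the class of any single vertex, since the cube complex of $\Z^s$ is contractible. Hence every vertex generator represents the tower generator up to a power of $U$. Tracking the $U$-power attached to $[k',\emptyset]$ shows that the tower contains an element of grading at least $\frac{(k')^2+s}{4}$, which gives $d \ge \max_{k'} \frac{(k')^2+s}{4}$.

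\textbf{Step 3: upper bound.} This is the main obstacle. We must show that no non-torsion class has grading exceeding the maximum. Take a non-torsion cycle $x = \sum U^{a_i}[K_i,E_i]$. Its image in the localized complex equals $U^{m}$ times the tower generator. In $\mathbb{CF}^\infty$, each cube's top-grading contribution is controlled by its vertices: the weight of $[K,E]$ is the minimum over its vertices of the vertex weights. Applying the augmentation map $\mathbb{CF}^- \to \F[U]$, which sends each vertex to the appropriate $U$-power, $x$ must contain some vertex $[K_i,\emptyset]$ with nonzero coefficient. Since the map is graded, we get $\mathrm{gr}(x) \le \frac{K_i^2+s}{4} \le \max_{k'}\frac{(k')^2+s}{4}$.

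The delicate part is making this augmentation map well defined and graded on the minus complex with Zemke's conventions. I expect to need the fact that $\chi(K)=-(K^2+s)/8$-type weight functions are bounded below on the lattice, which follows from negative-definiteness. With that, the maximum in \eqref{main1} is attained, and the two bounds together give the theorem.
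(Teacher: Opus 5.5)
Your reduction is the same as the paper's: move the question to lattice homology and read off the top of the tower there. Your lattice-side Steps 2 and 3 are sound, and more careful than the paper's one-line claim that non-torsion classes are represented by vertices $[K,\varnothing]$. Vertices are cycles that survive $U$-localization, which gives the lower bound. For the upper bound, sending $[K,\varnothing]$ to the appropriate power of $U$ in $\F[U,U^{-1}]$ and killing every $[K,E]$ with $E\neq\varnothing$ is a graded chain map, because each $\partial[K,\{v\}]$ has two vertex terms of equal grading. So a homogeneous non-torsion cycle contains a vertex term, and its grading is at most $\max_{K\in[k]}\frac{K^2+s}{4}$. The step you single out as the main obstacle is therefore not where the difficulty lies.

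The gap is Step 1, which you treat as a choice of normalization but which needs proof. The identification available from Zemke (Theorem~\ref{isom2}) is only relatively graded. On the spin$^c$ structure $[k]$ it therefore satisfies $\widetilde{\mathrm{gr}} = \mathrm{gr} + c_{[k]}$ for some unknown constant $c_{[k]}$, and your lattice computation only yields $d(Y,[k]) = \max_{K\in[k]}\frac{K^2+s}{4} + c_{[k]}$. Your observation that $\frac{K^2+s}{4}$ is the grading of the cobordism class $F_{W,\mathfrak{t}_K}(\Theta)$ does not determine $c_{[k]}$. Since $W$ is negative definite, that class is non-torsion, so the observation only reproduces the Ozsv\'ath--Szab\'o inequality $d(Y,[k])\geq \frac{K^2+s}{4}$, i.e.\ $c_{[k]}\geq 0$, which is the same inequality as your Step 2. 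The upper bound, $c_{[k]}\le 0$, needs an actual theorem: the isomorphism must carry the class of some vertex $[K,\varnothing]$ to an element of $\HFh^-(Y;[k])$ of grading exactly $\frac{K^2+s}{4}$. This is what the paper's proof supplies. Proposition~\ref{isom4} says the identification of $\CFl^-(Y)$ with $(H_*(\Cc_\Lambda(\Hh),\D^0),\D^1)$ respects absolute gradings, with the surgery complex graded through the cobordism shift \eqref{hee2}. Lemma~\ref{isom7} says Zemke's homotopy equivalence, which is built on that identification, preserves gradings. You need to invoke or prove this compatibility rather than impose it by convention.
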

    \noindent This result is interesting for several reasons. First, the $d$-invariant is relevant on its own merits as a topological invariant of rational homology spheres. Also, given the work of Kutluhan-Lee-Taubes in~\cite{KLT} (and its sequels) identifying the Heegaard Floer invariants with Seiberg-Witten invariants, the $d$-invariant should coincide with the Fr{\o}yshov $h$-invariant, so Theorem~\ref{main} gives new information about the Fr{\o}yshov invariant as well. 

    The author's particular interest in the $d$-invariant stems from quantum topology, specifically, the conjectural connections between the Gukov-Pei-Putrov-Vafa (GPPV) invariant and Floer theory. The GPPV invariant associates to a 3-manifold $Y$, with spin$^c$-structure $[k]$, a $q$-series $\hat{Z}_{[k]}$, which Murakami showed in~\cite{Mur25} converges to the Witten-Reshitikhin-Turaev invariants in a certain sense. The $\hat{Z}$-invariant was computed Gukov-Manolescu for  Brieskorn spheres in~\cite{GM19}. In~\cite{AJK}, Akhmechet-Johnson-Krushkal used N\'emethi's graded root construction from lattice homology to give an algorithm for computing $\hat{Z}$ for negative-definite plumbed 3-manifolds, and in~\cite{Li24}, Liles used this algorithm to compute the $\hat{Z}$ for Brieskorn spheres. 

    Given the existing connections between lattice homology and the $\hat{Z}$-invariant, as well as Zemke's isomorphism between Heegaard Floer and lattice homology from~\cite{Zem21}, it seems plausible to expect some meaningful connection between Heegaard Floer theory and the $\hat{Z}$-series. The $d$-invariant could provide one basis for such a connection: the minimal $q$-degree of terms in the $\hat{Z}$-series is a rational number usually denoted $\Delta$ in the literature, which was conjectured in~\cite{AJK} to be equal to the $d$-invariant for rational homology spheres. In~\cite{HNS}, Harichurn-N\'emethi-Svoboda show that this conjecture is false; however, the broader question of whether, and how, the $d$-invariant and $\Delta$ might in general be related, remains open. The author hopes that this new, calculation of the $d$-invariant might provide some insight into this question.

    This paper is structured as follows. In Section~\ref{plu}, we give review terminology for plumbed 3-manifolds, and establish notation which will be used in the rest of the paper. In Section~\ref{hf}, we give a brief introduction to the Heegaard Floer invariants, and in Section~\ref{ls00}, we give the definition of Manolescu-Ozsv\'ath's link surgery complex of~\cite{MO}, the reformulation of Heegaard Floer homology which will be used in proving Theorem~\ref{main}. In Section~\ref{lh}, we define lattice homology and discuss the various results connecting lattice homology with Heegaard Floer homology, from~\cite{OSSspec} and~\cite{Zem21}. Finally, in Section~\ref{ver}, we give a proof of Theorem~\ref{main}.

    \begin{ackno}
       \emph{I would like to thank Shimal Harichurn for suggesting this problem and Peter Ozsv\'ath for suggesting a possible approach. I am also grateful to  Andr\'as N\'emethi, Ian Zemke, and Tom Mrowka for many helpful comments during the preparation of this paper.}
    \end{ackno}

    \section{Plumbed 3-manifolds}\label{plu}

        All manifolds considered in this paper will be negative-definite plumbed 3-manifolds. In this section, we therefore provide a brief review of definitions and terminology for these manifolds. A more complete exposition of this topic can be found in any number of sources, for instance~\cite{GomStip}.
        
        The data for a plumbed manifold is a \emph{plumbing graph} $G$ which has finitely many vertices and no cycles. In this paper, we will also assume all graphs are connected. We write $V(G) = \{v_1, \ldots, v_s\}$ to denote the vertices of $G$, and $E(G)$ to denote the edges of $G$; we say that $[v_i, v_j] \in E(G)$ if and only if $v_i$, $v_j$ are connected by an edge of $G$. Each vertex $v_i \in V(G)$ is decorated with an integer weight $m_i$; we write $\vec{m} = (m_1, \ldots, m_s)$ for this weight vector. Later, we will usually suppress the $G$ from the notation for $V(G)$ and $E(G)$. The plumbing graph and weight vector naturally determine a framed link of unknots $L = \bigcup_{i = 1}^s K_i \subseteq S^3$, with framing given by $\vec{m}$, which we call the \emph{plumbing link}.  Define the \emph{plumbing matrix} $M = (a_{ij})$ corresponding to $G$ as
        \[
           a_{ij} = \begin{cases}
                m_i & \text{ if } i = j; \\
                 1 & \text{ if } i \neq j \text{ and } [v_i, v_j] \in E(G); \\
                 0 & \text{ otherwise}
                \end{cases}
        \]
        To obtain a 3-manifold $Y$ from this set-up, we do $\vec{m}$-framed Dehn surgery along $L \subseteq S^3$. The corresponding 4-manifold $X$ is constructed by attaching an $m_i$-framed $2$-handle to $B^4$ along each component $K_i$ of $L$. Then we have $Y = \del X$.
    
        Next, we discuss characteristic vectors and spin$^c$ structures. There exists a canonical basis $\{[v_i]\}_{i = 1}^s$ for $H_2(X; \Q)$. When $b_1(Y) = 0$, we have $H^2(X; \Q) \cong \Hom(H_2(X; \Q), \Q)$, so we have a dual basis $\{[v_i]^*\}_{i = 1}^s$ for $H^2(X; \Q)$. In this case, we write
        \[
           k \cdot x = \sum_{i = 1}^s k_i x_i,
        \]
        for each $k = \sum_{i = 1}^s k_i [v_i]^* \in H^2(X; \Q)$ and $x = \sum_{i = 1}^s x_i [v_i] \in H_2(X; \Q)$. We write $(x, y) = x^T M y$ in this basis, so that $M$ determines the intersection form of $X$. We will often write $x^2$ for $(x, x)$, as we do in the statement of Theorem~\ref{main}. The 3-manifold $Y$ is a \emph{negative definite} plumbed 3-manifold precisely when $M$ determines a negative definite intersection form on $X$. The \emph{characteristic vectors of $X$} are defined as 
        \[
            \Char(X) := \{ k \in H^2(X; \Q): k\cdot x + (x, x) = 0 \text{ for each } x \in H_2(X; \Q)\}.
        \]
        Notice that in the canonical basis $\{[v_i]^*\}$ for $H^2(X; \Q)$, we have
        \[  
            \Char(X) \cong \vec{m} + 2\Z^s.
        \]
        Recall that taking the first Chern class defines a bijection $c_1: \spin^c(X) \to \Char(X)$. When we restrict our spin$^c$ structures to $Y$, this determines an isomorphism
        \begin{equation}\label{plu1}
            \spin^c(Y) \cong \frac{\Char(X)}{2 M \Z^s} = \frac{\vec{m} + 2 \Z^s}{2 M\Z^s}
        \end{equation}
        in the canonical basis; that is $k, k' \in \Char(X)$ are representatives of the same element of $\spin^c(Y)$ if and only if $k - k' \in 2 M \Z^s$ in the canonical basis.

    \section{Heegaard Floer homology}\label{hf}

        Heegaard Floer homology is an invariant of closed 3-manifolds with wide applications to problems in low-dimensional topology. The goal in this section is not to give a full introduction either to the construction or the applications of this invariant, but to provide just enough information to understand the sections that follow and the proof of Theorem~\ref{main}. The reader interested in a more thorough treatment is directed to~\cite{OShol1},~\cite{OSAbs}, or~\cite{OSlect}, among many excellent sources.

        \subsection{Definitions}

            The Heegaard Floer homology of a closed 3-manifold $Y$ is constructed as follows. A \emph{Heegaard diagram} $(\Sigma, \al, \be)$ consists of a genus $g$ surface $\Sigma$ and families $\al = \{\alpha_1, \ldots, \alpha_g\}$ and $\be = \{\beta_1, \ldots, \beta_g\}$ of closed curves in $\Sigma$, such that $\al \cup \be$ generates $H_1(\Sigma)$. Let $U_{\al}$ denote the handlebody with boundary $\Sigma$ determined by attaching 2-handles along the $\alpha$-circles, and $U_{\be}$ denote the handlebody, also with boundary $\Sigma$, determined by attaching 2-handles along the $\beta$-circles. We also choose a basepoint $z \in \Sigma$ in the complement of the $\alpha$- and $\beta$-arcs. Then $(\Sigma, \al, \be, z)$ is a \emph{Heegaard splitting} of $Y$ if $Y = U_{\al} \cup_{\Sigma} U_{\be}$; it is a well known fact that every closed 3-manifold $Y$ admits a Heegaard splitting. 

            Given a Heegaard splitting $(\Sigma, \al, \be, z)$ for $Y$, let $\Tt_{\al}$ and $\Tt_{\be}$ denote the $g$-dimensional Lagrangian tori in the $2g$-dimensional symplectic manifold $\Sym^g(\Sigma)$. Then $\CF^{\circ}(Y)$ is generated by intersection points $\x \in \Tt_{\al} \cap \Tt_{\be}$. (The ``$\circ$'' in $\CF^{\circ}$ denotes one of the flavors of Heegaard Floer homology, which we describe in slightly more detail in the next paragraph.) Letting $\pi_2(\x, \y)$ denote the set of holomorphic disks in $\Sigma$ with boundary along $\Tt_{\al} \cup \Tt_{\be}$ and corners at $\x$ and $\y$, the differential counts rigid moduli spaces of holomorphic disks $\pi_2(\x, \y)$. More specifically, Heegaard Floer homology is equipped with a relative $\Z$-grading $\mu$, called the \emph{Maslov grading}, defined by assigning a $\Z$-value to each element of $\pi_2(\x, \y)$ and extended as a relative grading to generators as
            \[  
                \mu(\y) - \mu(\x) = \mu(\phi)
            \]
            for each $\phi \in \pi_2(\x, \y)$.

            There are several flavors of Heegaard Floer homology -- $\widehat{\HF}$, $\HF^+$, $\HF^-$, $\HF^{\infty}$, and $\HFh^-$ -- which are determined by the ground ring and number of base-points in $\Sigma \smallsetminus (\bigcup_i \alpha_i) \cup ( \bigcup_i \beta_i )$ involved in the construction. Here and in what follows, we will be using $\Z_2$-coefficients, so $\F$ will always denote $\Z_2$. All flavors of Heegaard Floer homology are generated by the intersections $\x \in \Tt_{\al} \cap \Tt_{\be}$ in $\Sym^g(\Sigma)$. Having said this, $\CF^{\infty}(Y)$ is generated over $\F[U, U^{-1}]$, so that generators are written as $[\x, i]$, where $\x \in \Tt_{\al} \cap \Tt_{\be}$, $i \in \Z$, and the $U$-action raises $i$ by $1$ and $U^{-1}$-action drops $i$ by 1. The complex $\CF^{-}(Y)$ is generated over $\F[U]$ by $[\x, i]$ with $i < 0$ and the stipulation that $U\cdot [\x, 0] = 0$ for each $\x$; $\CF^+(Y)$ is generated over $\F[U]$ by $[\x, i]$ with $i > 0$, and $\widehat{\HF}(Y)$ is generated over $\Z$ by $[\x, 0]$. Finally, $\CFh^-(Y)$ is the completion of $\CF^-(Y)$ over ground ring $\F[[U]]$. The reader interested in a more thorough discussion of the basepoints how they affect the differentials in these various cases is directed to e.g.~\cite{OSlect}.

            All flavors of Heegaard Floer homology split over spin$^c$-structures of $Y$, as
            \[
                \HF^{\circ}(Y) = \bigoplus_{[k] \in \spin^c(Y)} \HF(Y; [k]).
            \]
            In this paper, we will primarily be concerned with the completed version $\HFh^-(Y)$, which appears in Zemke's isomorphism with lattice homology. 

        \subsection{Cobordisms and gradings}\label{cogr}
            
            If our 3-manifold $Y$ is equipped with a torsion spin$^c$-structure $\s$, then there is also an absolute $\Q$-grading $\widetilde{\gr}$ which lifts the relative $\Z$-grading $\mu$ in the sense that for any pair of generators $\x, \y$ and $\phi \in \pi_2(\x, \y)$, 
            \[
                \grt(\y) - \grt(\x) = \mu(\phi).
            \]
            This grading is constructed manually on $\HF^{\infty}(S^3, \s_0)$ in~\cite{OS01} (where $\s_0$ denotes the canonical spin$^c$-structure on $S^3$), and has the property that the minimal grading element of $HF^+(Y; \s)$ has $\grt(\x_0) = 0$. For an arbitrary closed 3-manifold $Y$ with torsion spin$^c$-structure $\s$, the absolute grading on $\HF^{\infty}(Y; \s)$ is constructed using the grading on $\HF^{\infty}(S^3)$, together with \emph{cobordism maps} for Heegaard Floer homology. In order to discuss the absolute grading in more detail in what follows, we now give an overview of the construction of these maps.  In this paper, we will only be interested in cobordisms between $S^3$ and a closed manifold $Y$, so we describe only those maps here. 

            Any 3-manifold $Y$ can be obtained via Dehn surgery along some framed link in $S^3$. For a link $L = \bigcup_{i = 1}^s K_i \subseteq S^3$, \emph{a bouquet $B(L)$ for the link $L$} is a 1-complex in $S^3$ which is the union of $L$ with a collection of paths joining each component to a particular basepoint in $S^3$. Let $L = \bigcup_{i = 1}^s K_i$ be a link in $S^3$ with framing $\vec{m} = (m_1, \ldots, m_s)$, such that $Y$ is obtained by $\vec{m}$-framed Dehn surgery along $L$. A \emph{Heegaard triple subordinate to $B(L)$} is a triple $(\Sigma, \al, \be, \ga, z)$, with $\al = \{\alpha_i\}_{i = 1}^g, \be = \{\beta_i\}_{ i = 1}^g,$ and $\ga = \{\gamma_i\}_{i = 1}^g$ closed curves in $\Sigma$ and $z$ in the complement of the $\alpha$-, $\beta$-, and $\gamma$-curves such that:
            \begin{itemize} 
                \item $(\Sigma, \al, \be)$ is a Heegaard diagram for the complement of the bouquet $B(L)$;

                \item $\gamma_{s + 1}, \ldots \gamma_g$ are small isotopies of the $\beta_{s + 1},\ldots, \beta_g$;

                \item For each $1 \leq i \leq s$, each $\beta_i$ and $\gamma_i$ lie in the boundary of a neighborhood of $K_i$;

                \item For each $1 \leq i \leq s$, the $\beta_i$ are meridians for the $K_i$, and are disjoint from each $\gamma_j$ with $i \neq j$ and meet $\gamma_i$ in a single transverse intersection;

                \item For each $1 \leq i \leq s$, $\gamma_i$ corresponds to the framing of $K_i$ under the natural identification. 
            \end{itemize}
            It is shown in~\cite{OS01} Proposition 4.3 that a Heegaard triple of this form determines a 4-manifold $X_{\alpha\beta\gamma}$ with boundary components
            \begin{align*}
                Y_{\alpha\beta} &= S^3 \\
                Y_{\beta\gamma} &= \#^{g - s} (S^1 \times S^2)\\
                Y_{\alpha \gamma} &= Y
            \end{align*}
            and that after filling $Y_{\beta\gamma}$ with the boundary connected sum $\#^{g - s}(S^1 \times B^3)$, we obtain the standard cobordism $W$ between $S^3$ and $Y$. 

            Let $\s$ be a spin$^c$-structure on $Y$. Write $Y_1 = \#^{g - s} (S^1 \times S^2)$ and let $\s_1$ denote the torsion spin$^c$-structure of $Y_1$. Recall from Section 2.4 of~\cite{OS01} that there is a well-defined top-dimensional homology group of $\widehat{\HF}(Y_1; \s_1) = \Z[U]$, and let $\Theta \in \widehat{\HF}(Y_1, \s_1)$ be one of its generators. (Here, we are using the relative $\Z$-grading $\mu$ to determine the degree.) Let $\mathfrak{t} \in \spin^c(X_{\alpha \beta \gamma})$ be a spin$^c$-structure which restricts to the canonical spin$^c$ structure $\s_0$ on $Y_{\alpha \beta} = S^3$, to $\s_1$ on $Y_{\beta \gamma} = Y_1$, and to $\s$ on $Y$. Then we have a natural map
            \begin{equation}\label{hee1}
                F_{W, \s}^{\circ}: \CF^{\circ}(S^3; \s_0) \to \CF^{\circ}(Y, \s)
            \end{equation}
            for each flavor of Heegaard Floer homology, such that for $\x \in \CF^{\circ}(S^3; \s_0)$, $F_{W, \s}^{\circ}(\x)$ is a sum of $\y \in \CF^{\circ}(Y, \s)$ such that there exist rigid holomorphic triangles in $\pi_2(\x, \Theta, \y)$ (i.e. triangles with boundary along $\Tt_{\al}, \Tt_{\be}$ and $\Tt_{\ga}$ in $\Sym^g(\Sigma)$, with corners at $\x, \Theta$ and $\y$ and $\mu(\phi) = 0$), with coefficients in the ground ring. 

            Suppose we have a torsion spin$^c$ structure $\s$ on $Y$. Then the absolute grading $\grt$ is defined on generators $\y\in \CF^{\circ}(Y; \s)$ according to the rule that for all notation as above $\x$ a generator of $\CF^{\circ}(S^3; \s_0)$, and $\phi \in \pi_2(\x, \Theta, \y)$, 
            \begin{equation}\label{hee3}
                \grt(\y) = \mu(\phi) + n_z(\phi) + \frac{c_1(\mathfrak{t}) - 2 \chi(W) - 3 \sigma(W)}{4}.
            \end{equation}
            Moreover, $\grt$ has the property that for cobordism maps as in~\eqref{hee1} and $\x$ a generator of $\CF^{\circ}(S^3; \s_0)$,
            \begin{equation}\label{hee2}
                \grt(F_{W, \s}(\x)) - \grt(\x) = \frac{ c_1(\mathfrak{t}) - 2 \chi(W) - 3 \sigma(W)}{4}.
            \end{equation}
            The cobordism maps will not arise explicitly in what follows. However, see Remark~\ref{ls8}, below, for a discussion of how cobordism maps are involved in the construction of an absolute grading for the link surgery complex, which is why they are included here.

            \section{The link surgery formula}\label{ls00}

            In the proof of Theorem~\ref{main}, we will use an identification between a reformulation of Heegaard Floer homology due to Manolescu and Ozsv\'ath, called the \emph{link surgery complex}, and lattice homology. This identification will be discussed below, in Section~\ref{isom}, and in order to understand it, we need to give the definition of the link surgery complex here. 

             The link surgery complex is a reformulation of Heegaard Floer homology defined in the case of plumbed 3-manifolds. For more details beyond the treatment given here,  see~\cite{MO}, in which the link surgery complex was originally defined, or~\cite{OSSspec} or~\cite{IZbord} for slightly later discussions.

             We first need to give the definitions of several types of Heegaard diagrams for links. This treatment follows Section 3.1 of~\cite{MO}. First, a \emph{multipointed Heegaard diagram} is $\Hh = (\Sigma, \al, \be, \ga, \w, \z)$ such that
             \begin{itemize}
                 \item (Surface) $\Sigma$ is a genus $g$ surface;
                 \item (Circles) For some $k \geq 0$, $\al, \be$ are each collections of $(g + k - 1)$ simple closed curves spanning a $g$-dimensional subspace of $H_1(\Sigma; \Z)$;
                 \item (Basepoints) Let $\{A_i\}_{i = 1}^k$ and $\{B_i\}_{i = 1}^k$ be the connected components of $\Sigma \smallsetminus \bigcup_{i = 1}^{g +k - 1} \alpha_i$ and $\Sigma \smallsetminus \bigcup_{i = 1}^{g + k - 1} \beta_i$, respectively. Then for some $m \leq k$, the basepoints are $\w = (w_1, \ldots, w_k)$ and $\z = (z_1, \ldots, z_m)$, with the property that there is some permutation $\sigma$ of $\{1, \ldots, m\}$ such that
                 \begin{align*}
                    w_i \in A_i \cap B_i & \text{ for each } 1 \leq i \leq k \\
                    z_i \in A_i \cap B_{\sigma(i)} & \text{ for each } 1 \leq i \leq m
                 \end{align*}
             \end{itemize}
             Any multi-pointed diagram $\Hh$ specifies an oriented link $\vec{L} = \bigcup_{i = 1}^s \vec{K}_i \subseteq S^3$ according to the following procedure. Let $U_{\al}$ and $U_{\be}$ denote the $\alpha$-and $\beta$-handlebodies discussed at the beginning of Section~\ref{hf}. Then for each $1 \leq i \leq m$, join $w_i$ to $z_i$ via an arc in $A_i$, which we push into $U_{\al}$, and join $z_i$ to $w_{\sigma(i)}$ via an arc in $B_i$, which we push into $U_{\be}$. Now we have a link $\vec{L}$ with $s \leq m$ components. We call the extra $w_{m+1}, \ldots, w_{k}$ \emph{free basepoints},
             
             A \emph{Heegaard Diagram for the link $L$} is a multipointed Heegaard diagram $\Hh = (\Sigma, \al, \be, \w, \z)$ which specifies $L$ according to the procedure above. We say that $\Hh$ is \emph{link minimal} if $m = s$, i.e. each link component has exactly 2 basepoints. We say that $\Hh$ is \emph{minimally pointed} if it is link minimal and $k = m = s$, i.e. there are no free basepoints. For a link $L$ of unknots, it is always possible to find a minimally pointed Heegaard diagram for $L$ (as shown in Section 3.1 of~\cite{MO}.

            The link surgery complex is constructed in the following way. Let $Y$ be a plumbed 3-manifold with corresponding plumbing link $L = \bigcup_{i = 1}^{\ell} K_i$. Write $\Lambda = (m_1, \ldots, m_{\ell}) \in \Z^{\ell}$ for the framing of $L$, which is also the vector of integer weights of the original plumbing graph, and let $\mathcal{H}^L = (\Sigma, \al, \be, \w, \z)$ denote a link minimal Heegaard diagram for the link $L$. Define the lattice
            \[
                \mathbb{H}(L) = \prod_{i = 1}^{\ell} \left( \frac{\mathrm{lk}(L \smallsetminus K_i, L)}{2} + \Z\right).
             \]
            The link surgery complex is defined as the direct sum over sublinks $M$ of $L$:
             \[
                \Cc_{\Lambda}(L) = \bigoplus_{M \subseteq L} \prod_{\s \in \mathbb{H}(L)} \Mm^-(\mathcal{H}^{L \smallsetminus M}, \psi^{ M}(s)),
            \]
            where $\Hh^{L \smallsetminus M}$ denotes the Heegaard diagram obtained from $\Hh^{L}$ by deleting the basepoint $z_i$ for each component $K_i$ which is included in M, and $\psi^{M}: \mathbb{H}(L) \to \mathbb{H}(L \smallsetminus M)$ is a restriction map which deletes the components of $\s$ corresponding to components of $M$. The space $\mathfrak{A}^-(\mathcal{H}^{L \smallsetminus M}, \psi^{ M}(s))$ is defined as the $\F[[U_1, \ldots, U_{\ell}]]$-module generated by terms of the form
            \[
              U_1^{i_1} \cdots U_{\ell}^{i_{\ell}} \cdot \x,
             \]
            where $\x \in \mathbb{T}_{\al} \cap \mathbb{T}_{\be}$ is a generator of the link Floer complex, $i_j \geq 0$ for each $j$, we place a further lower bounds on each $i_j$ based on the component $s_j$ of $s$, and the $j$-th component of the Alexander grading of $\x$. For more details on the exact conditions, see e.g. Section of~\cite{MO} or Section 6.1 of~\cite{IZbord}. We will not need the exact conditions in this paper. Note also that in practice, the action of each of the $U_i$ have the same action, so we can view $\C_{\Lambda}(L)$ as a module over $\F[[U]]$, where all $U_i$ act by $U$.

            The complex $\Cc_{\Lambda}(L)$ has a hypercube structure which we will see in Section~\ref{isom} parallels the hypercube structure of the lattice homology complex. Each sublink $M$ of $L$ corresponds to a point $\epsilon \in \{0, 1\}^{\ell}$, where $\epsilon_i = 0$ if $K_i \nsubseteq L$ and 1 if $K_i \subseteq L$.  The hypercube maps $\Phi^{N}$ map from the component of $\Cc_{\Lambda}(L)$ corresponding to $L \smallsetminus M$, to the one corresponding to $L \smallsetminus (M \cup N)$. The map $\Phi^{N}$ contributes to the length-$r$ hypercube map of $\Cc_{\Lambda}(L)$ precisely when $|N| = r$. These maps determine an overall differential $\mathcal{D}$ on $\Cc_{\Lambda}(L)$, which decomposes as a sum
            \begin{equation}\label{ls6}
               \mathcal{D} = \sum_{r = 0}^{\infty} \mathcal{D}^r,
             \end{equation}
            where $\mathcal{D}^r$ is the length-$r$ hypercube map for each $r$. Let $(\Cc_{\Lambda}(L), \D^0)$ denote the chain complex obtained by applying only the length-0 hypercube map on the link surgery complex, and let $H_*(\mathcal{C}_{\Lambda}(L))$ denote the homology of this chain complex. In Section~\ref{isom}, we will discuss Ozsv\'ath, Stipsicz, and Szab\'o's  $\F[[U]]$-module isomorphism between the chain complex $(H_*(\mathcal{C}_{\Lambda}(L)), \D^1)$ and the lattice homology complex. (See Proposition~\ref{isom4} below.)

            Finally, we note the special case of the main result of~\cite{MO} which we need in order to verify Theorem~\ref{main}. This theorem provides a more detailed statement of the fact that the link surgery complex is a reformulation of the Heegaard Floer complex.
            \begin{theorem}\label{ls7}
                [Special case of Theorem 9.6 of~\cite{MO}] Let $Y$ be a 3-manifold obtained by $\Lambda$-surgery along the link $L \subseteq S^3$. Then if $\Hh$ is a link-minimal Heegaard diagram for the link $(L, \Lambda)$, $\s$ is a spin$^c$-structure on $Y$, and $\D^0$ denotes the length-zero hypercube map for the link-surgery complex $\Cc_{\Lambda}(\Hh)$, then 
                \begin{equation}\label{ls2}
                    H_*(\Cc_{\Lambda}(\Hh, \s), \D^0) \cong \HFh^-(Y; \s)
                \end{equation}
                as $\F[[U]]$-modules.
            \end{theorem}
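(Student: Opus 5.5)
The plan is to deduce the statement from the general surgery theorem of Manolescu--Ozsv\'ath. I then pass from the full hypercube differential to its length-zero part using the cube filtration. The point to keep in view is that Theorem 9.6 of~\cite{MO} is a statement about the total complex $(\Cc_{\Lambda}(\Hh,\s),\D)$ with $\D=\sum_r \D^r$ as in~\eqref{ls6}. To reach the statement as worded, with $\D^0$ alone, I must show that the higher maps $\D^r$, $r\ge 1$, do not change the homology as an $\F[[U]]$-module.

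\emph{Step 1: specialize~\cite{MO}.} The plumbing link $L$ is a link of unknots, so by Section 3.1 of~\cite{MO} it admits a minimally pointed, in particular link-minimal, diagram $\Hh$. Theorem 9.6 of~\cite{MO}, applied with the framing $\Lambda=\vec m$ and restricted to the summand of $\mathbb{H}(L)$ lying over $\s$, gives a quasi-isomorphism of $\F[[U]]$-complexes $(\Cc_{\Lambda}(\Hh,\s),\D)\simeq \CFh^-(Y;\s)$. Here $\s$ corresponds to a class of the lattice $\mathbb H(L)$ modulo $\Lambda\Z^\ell$, compatibly with~\eqref{plu1}. I would check that identifying all $U_i$ with a single $U$ is compatible with this quasi-isomorphism; this is stated in~\cite{MO} for link-minimal diagrams.

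\emph{Step 2: the cube filtration.} I filter $\Cc_{\Lambda}(\Hh,\s)$ by $|\epsilon|$, the cube coordinate of the sublink $M$. Since $\D^r$ raises $|\epsilon|$ by exactly $r$, the associated spectral sequence has the following pages:
\begin{itemize}
\item $E_0$ carries the differential $\D^0$;
\item $E_1=H_*(\Cc_{\Lambda}(\Hh,\s),\D^0)$;
\item $d_1$ is induced by $\D^1$, so $E_2=H_*(H_*(\Cc_\Lambda),\D^1)$.
\end{itemize}
The spectral sequence converges to $\HFh^-(Y;\s)$, because the filtration is finite of length $\ell$ and we work over the complete ring $\F[[U]]$. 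What remains is to compare $E_1$ with the abutment.

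\emph{Step 3: the main obstacle.} I expect the hard part to be showing that $\D^{\ge 1}$ does not alter the homology as an $\F[[U]]$-module. This needs both degeneration of the spectral sequence and a trivial extension problem. I would first try to establish it for the negative-definite plumbing link by a grading argument. Each summand $\Mm^-(\Hh^{L\smallsetminus M},\psi^M(s))$ is, for unknot components, a copy of $\F[[U]]$ up to homotopy. Length-$r$ maps shift the grading by a fixed amount that is incompatible with nonzero $d_r$ between free $\F[[U]]$-summands concentrated in the relevant degrees.

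If this fails in general, the fallback would use the spectral sequence together with the identification of $E_2$ with lattice homology (Proposition~\ref{isom4}) and Zemke's isomorphism~\cite{Zem21} between lattice homology and $\HFh^-$. Comparing ranks would force collapse at $E_2$, which is weaker than collapse at $E_1$. I should therefore flag honestly that, for the literal $\D^0$ statement, Step 3 needs a genuine vanishing of $d_1$ on homology. In particular it needs $\D^1$ to act trivially on $H_*(\Cc_\Lambda,\D^0)$, and this I do not expect to hold in general.

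The risk is therefore concentrated in Step 3. The safest reading of the statement, and the one downstream arguments actually use, is the total-differential version furnished directly by Step 1.
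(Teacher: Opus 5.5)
\textbf{Genuine gap: Step~3 cannot be completed, because with $\D^0$ the statement is false.} The obstruction you flagged is real and fatal.

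The length-zero map $\D^0$ is the internal differential of each summand $\mathfrak{A}^-(\Hh^{L\smallsetminus M},\psi^M(s))$. It preserves both $M$ and $s$. Hence $H_*(\Cc_{\Lambda}(\Hh,\s),\D^0)$ is the sum over $M$ of the products over $s$ of the homologies of these summands.

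The corner $M=L$ alone contributes one copy of $\HFh^-(S^3)\cong\F[[U]]$ for every $s\in\mathbb{H}(L)$ lying over $\s$. When $Y$ is a rational homology sphere the framing matrix is nondegenerate. So these $s$ form a coset of a rank-$\ell$ lattice, which is an infinite set. Consequently $H_*(\Cc_{\Lambda}(\Hh,\s),\D^0)$ modulo $U$ is an infinite-dimensional $\F$-vector space.

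By contrast, $\HFh^-(Y;\s)$ is the homology of a finitely generated free complex over the Noetherian ring $\F[[U]]$. It is therefore finitely generated, and finite-dimensional modulo $U$. Equivalently, by Proposition~\ref{isom3}, $H_*(\Cc_{\Lambda}(\Hh,\s),\D^0)$ is $\CFl^-(Y;\s)$ as a module, with one generator for each of the infinitely many $[K,E]$ with $K$ representing $\s$.

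The simplest instance is the $(-1)$-framed unknot. There $Y=S^3$, but the $\D^0$-homology is $\prod_{s\in\Z}\F[[U]]\oplus\prod_{s\in\Z}\F[[U]]$ rather than $\F[[U]]$.

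This also shows where your grading argument breaks. The page-one differential $d_1$ is induced by the $v$- and $h$-type maps. Between these $\F[[U]]$ summands those maps are multiplications by powers of $U$, hence nonzero. Under Proposition~\ref{isom3}, $d_1$ is exactly the lattice differential, which is visibly nonzero. Your fallback, even if it worked, would identify $E_2$ (lattice homology) with the abutment, which is a different statement.

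\textbf{What is true, and how to fix the proposal.} The correct statement uses the total differential $\D=\sum_r\D^r$ of \eqref{ls6}: $H_*(\Cc_{\Lambda}(\Hh,\s),\D)\cong\HFh^-(Y;\s)$. This is what Theorem~9.6 of \cite{MO} asserts, and it is exactly your Step~1. It is also all that the paper's own justification, a bare citation of that theorem, supports; the paper has misquoted the differential.

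So instead of hedging, state outright that the statement must be corrected by replacing $\D^0$ with $\D$. Then prove the corrected version by Step~1, and record that $H_*(\Cc_{\Lambda}(\Hh,\s),\D^0)$ is only the $E_1$ page of your cube-filtration spectral sequence.

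The distinction matters downstream, in Remark~\ref{ls8}. Absolute gradings on $E_1$ must be inherited from the graded total complex through the filtration. They cannot be transported along an isomorphism $E_1\cong\HFh^-(Y;\s)$, because no such isomorphism exists.
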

            Moreover the authors of~\cite{MO} assert (see Remark 9.7 of~\cite{MO}) that when $\s$ is a torsion spin$^c$-structure (as is always the case when $Y$ is a rational homology sphere), it is possible to view~\eqref{ls2} as an isomorphism of absolutely graded groups, and obtain a well-defined grading shift from the isomorphism between the two groups. The authors of~\cite{OSSspec} use this fact in the proof of their main result, to give the link surgery formula a well-defined absolute grading which agrees with the grading on lattice homology (see Proposition~\ref{isom4}). 

            \begin{remark}\label{ls8}
                \emph{For a rational homology sphere, are two ways to view the absolute grading on the link surgery formula. The first is by noting that all generators $\x \in \Cc_{\Lambda}(L)$ are elements of $\Tt_{\al} \cap \Tt_{\be}$, where $(\Sigma, \al, \be)$, is a Heegaard diagram for $S^3$. This means that we can place an absolute grading on these elements, such that the maximal grading of a non-torsion element (in the minus convention) is $0$. The isomorphism map constructed in Theorem~\ref{ls7} is a cobordism map which counts holomorphic triangles in $(\Sigma, \al, \be, \ga)$. The grading shift across the isomorphism of Theorem~\ref{ls7} is therefore precisely as in~\eqref{hee2}.}

                \emph{The other way to view the grading on $\Cc_{\Lambda}(L)$ is to say that since the map from Theorem~\ref{ls7} is an isomorphism on homology, we can use it to assign an absolute grading on $H_*(\Cc_{\Lambda}(\Hh, \s), \D^0)$, according to the rule that the grading of an element $\x \in H_*(\Cc_{\Lambda}(\Hh, \s), \D^0)$ is defined to be equal to the grading of its image in $\HFh^-(Y; \s)$. This is the convention implicitly adopted in~\cite{OSSspec}, and it is only by viewing the gradings on the link surgery complex in this way that Proposition~\ref{isom4}, below, makes sense.}
            \end{remark}

        \section{Lattice homology}\label{lh}

            The goal of this section is to give the definitions from lattice homology sufficient to understand the main result of~\cite{OSSspec} (Theorem 1.1 part 4 and Propositions 4.4 and 4.8 of~\cite{OSSspec}). This will provide enough information to complete the proof of Theorem~\ref{main}.

            \subsection{Definitions}

                Lattice homology is an invariant of negative definite plumbed 3-manifolds originally defined by N\'emethi in~\cite{Nem05}, to give a purely combinatorial means of computing Heegaard Floer homology. For such a 3-manifold $Y$, the invariant $\HFl^*(Y)$ is a hypercube of $\F[U]$-chain complexes equipped with a filtration. In Theorem 8.3 of~\cite{Nem05}, N\'emethi proves that for $Y$ equipped with a particular type of plumbing graph called an\emph{almost rational plumbing}, the zero-th degree of the lattice homology complex $\HFl^0(Y)$ is isomorphic to $\HF^+(Y)$; the result result~\eqref{Nem1} calculating the $d$-invariant for almost rational plumbings follows as a corollary. 

                In~\cite{OSSspec}, the authors use a different version of the lattice homology complex, which is equivalent to N\'emethi's original complex when $Y$ is a negative-definite plumbed 3-manifold. It is this reformulated version we will use here. Let $Y$ be a plumbed 3-manifold, with all notation as in Section~\ref{plu}. In the conventions of~\cite{OSSspec}, the lattice homology of $Y$ is a hypercube of chain complexes $\CFl^-(Y)$ which splits over spin$^c$-structures, as
                \[
                    \CFl^-(Y) = \bigoplus_{\s \in \spin^c(Y)} \CFl^-(Y; \s).
                \]
                The generators of $\CFl(Y)$ over $\F[[U]]$ are pairs $[K, E]$, where $K \in \Char(X)$ and $E \in P(V)$, that is, $E$ is some subset of the set of vertices of the plumbing graph $G$. . The complex has an filtration $\delta$ given by $\delta([K, E]) = |E|$, the number of elements in $E$. One should think of each $E \subseteq V$ as a sublink $M$ of the plumbing link $L$, analogous to the filtration for the link surgery complex above. The generator $[K,E]$ is in spin$^c$-structure $\s$ precisely when $K$ is a representative of $\s$ according to the isomorphism~\eqref{plu1}. 
                
                This complex is equipped with hypercube maps and a differential, but in this paper, we will only be concerned with generators of $\CFl^-(Y)$, so we will not discuss the formal definitions of these maps here. The interested reader may consult~\cite{Nem05} for the original formulation,~\cite{OSSspec} for the reformulation, or Zemke's paper~\cite{Zem21} for a more recent discussion. It is also possible to define other flavors of lattice homology, particularly $\CFl^+$ or $\CFl^{\infty}$, by a construction analogous to the one for Heegaard Floer homology. See e.g. Remark 3.4 of~\cite{OSSspec}, for more details. However, we are only concerned with the ``$-$'' version of lattice homology here. 

                When we are dealing with a torsion spin$^c$-structure $\s$, the lattice complex is also equipped with an absolute $\Q$-grading, which is defined in the following way. First, for a given $I \in P(V)$, define
                \[
                    2 f(K, I) = \sum_{v \in I} K (v) + \sum_{v, v' \in I } v \cdot v',
                \]
                where in the first sum, we are viewing $K$ as an element of $\Hom(H_2(X;\Z), \Z)$, and evaluating on $v \in H_2(X; \Z)$, and in the second, the dot product denotes the  intersection form. Since $K$ is a characteristic vector, $f(K, I)$ is clearly an integer for each $I$. Then we define \emph{the minimal G-weight} of a generator $[K, E]$ of $\CFl^-(Y)$ as
                \[
                    g([K, E]) = \min_{I \subseteq E} f(K, I)
                \]
                Then we define the absolute Maslov grading $\gr: \CFl^-(Y) \to \Q)$ as
                \begin{equation}\label{lh1}
                    \gr(U^i \otimes [K, E]) = -2i + 2 g([K, E]) + |E| + \frac{ K^2 - 2 \chi(W) - 3 \sigma(W)}{4},
                \end{equation}
                where $W$ denotes the cobordism between $S^3$ and $Y$ obtained by cutting out a small $B^4$ from $X$. Note that it is clear by inspection that:
                \begin{lemma}\label{lh2}
                    When $Y$ is a negative-definite plumbed homology sphere with all notation as above, then
                    \[
                        \gr([K, \varnothing]) = \frac{ K^2 - 2 \chi(W) - 3 \sigma(W)}{4} = \frac{K^2 + s}{4}.
                    \]
                \end{lemma}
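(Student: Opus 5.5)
Since $E=\varnothing$, the definition of the minimal $G$-weight gives immediately $g([K,\varnothing]) = \min_{I\subseteq\varnothing} f(K,I) = f(K,\varnothing)$. Both sums in the definition of $2f(K,I)$ are empty when $I=\varnothing$, so $f(K,\varnothing)=0$. Substituting $i=0$, $g=0$ and $|E|=0$ into the grading formula~\eqref{lh1} kills the first three terms and yields the first equality
\[
\gr([K,\varnothing]) = \frac{K^2 - 2\chi(W) - 3\sigma(W)}{4}.
\]

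For the second equality I will compute $\chi(W)$ and $\sigma(W)$ from the handle description in Section~\ref{plu}. The manifold $X$ is $B^4$ with $s$ two-handles attached, so $\chi(X) = 1 + s$. Removing a small open $B^4$ lowers the Euler characteristic by one, since $\chi(X) = \chi(W) + \chi(B^4) - \chi(S^3) = \chi(W) + 1$. Hence $\chi(W) = s$.

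The intersection form of $W$ agrees with that of $X$ and is represented by the plumbing matrix $M$. Because $Y$ is negative-definite, $M$ is negative definite of rank $s$, so $\sigma(W) = -s$. Then
\[
-2\chi(W) - 3\sigma(W) = -2s + 3s = s,
\]
which gives $\gr([K,\varnothing]) = (K^2+s)/4$.

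The only point needing care is that the signature and Euler characteristic refer to the cobordism $W$ and not to $X$. Removing a ball does not change $H_2$ or the intersection form, and it changes $\chi$ by exactly one, so this causes no difficulty. I expect no step to be a real obstacle; the lemma is a direct substitution.
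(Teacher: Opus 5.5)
Your proposal is correct and is the same direct substitution the paper has in mind when it calls the lemma ``clear by inspection'': with $E=\varnothing$ you get $i=0$, $g([K,\varnothing])=f(K,\varnothing)=0$ and $|E|=0$, and then $\chi(W)=s$ and $\sigma(W)=-s$ give $-2\chi(W)-3\sigma(W)=s$. The paper gives no written argument, and your derivation of $\chi(W)$ from $\chi(X)=\chi(W)+1$ and of $\sigma(W)$ from the negative-definite plumbing matrix supplies exactly the omitted details.
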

                
            \subsection{Isomorphisms}\label{isom}

                In this section, we discuss the isomorphisms which will be used in the proof of Theorem~\ref{main}. The result  
                consists of two main propositions, which provide the identifications we will actually use.
                \begin{proposition}\label{isom3}
                    [Proposition 4.4 from~\cite{OSSspec}] There is an isomorphism of chain complexes between the lattice homology complex $\CFl^-(Y)$, and the chain complex with underlying $\F[[U]]$-module $H_*(\Cc_{\Lambda}(\Hh), \D^0)$ and differential $\D^1$.
                \end{proposition}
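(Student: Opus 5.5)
The plan is to compute both sides explicitly, hypercube vertex by hypercube vertex, and then match generators and the length-one maps. First I identify the index sets. A summand of $H_*(\Cc_{\Lambda}(\Hh),\D^0)$ is labelled by a sublink $M\subseteq L$ and a lattice point $\s\in\mathbb{H}(L)$; a generator $[K,E]$ of $\CFl^-(Y)$ is labelled by $E\subseteq V$ and $K\in\Char(X)=\vec m+2\Z^s$. I match $E$ with $M$, as suggested in Section~\ref{lh}. I match $K$ with $\s$ by an affine bijection of the form $K=2\s-\vec m+(\text{linking correction})$. The correction is chosen so that the half-integer shifts $\mathrm{lk}(L\smallsetminus K_i,L)/2$ defining $\mathbb{H}(L)$ become the parity condition $K\equiv\vec m \pmod 2$. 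Under this bijection, the surgery identifications $\s\sim\s+\Lambda_i$ in the link surgery complex should correspond to $K\sim K+2Mv_i$, which is compatible with~\eqref{plu1}.

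Second, I compute each vertex group. I claim that for every $M$ and every $\s$,
\[
H_*\bigl(\Mm^-(\Hh^{L\smallsetminus M},\psi^M(\s))\bigr)\cong\F[[U]].
\]
The sublink $L\smallsetminus M$ is again a forest plumbing link of unknots. Each component of it is an iterated connected sum of Hopf links, so it is an L-space link. For an L-space link, the generalized Floer complex at $\s$ has homology a single tower $\F[[U]]$, whose top generator sits in grading $-2H_{L\smallsetminus M}(\psi^M(\s))$, where $H$ is the $H$-function. The $H$-function of a connected sum is the sum of the $H$-functions of the summands, and the Hopf link's $H$-function is explicit, so I can write $H$ in closed form. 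I expect this closed form to reproduce $g([K,E])=\min_{I\subseteq E}f(K,I)$, up to the grading normalization of~\eqref{lh1}. So the underlying $\F[[U]]$-modules agree, with one generator $[K,E]$ for each pair $(\s,M)$.

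Third, I match the differentials. The map $\D^1$ is the sum of the inclusion maps $\Phi^{+K_i}$ and the flip maps $\Phi^{-K_i}$. On homology each is an $\F[[U]]$-module map $\F[[U]]\to\F[[U]]$, so it is multiplication by $U^n$ for some $n\ge 0$, or zero. Both are nonzero: each becomes an isomorphism after inverting $U$, since the inclusion is a localization-type map and the flip map is a homotopy equivalence composed with an inclusion. Because the maps are homogeneous, the exponent $n$ is forced by the grading difference between source and target, which is a difference of $H$-values. I then check that these exponents equal the exponents $a_v$ and $b_v$ in N\'emethi's lattice differential,
\[
\partial[K,E]=\sum_{v\in E}\Bigl(U^{a_v}[K,E\smallsetminus v]+U^{b_v}[K+2v^*,E\smallsetminus v]\Bigr),
\]
where the shift $K\mapsto K+2v^*$ comes from the $\Lambda_v$-translation built into the flip map.

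I expect the main obstacle to be the second step. It requires two things: that the $\D^0$-homology at every vertex is genuinely a rank-one tower, and a careful bookkeeping of the Alexander-grading truncations in $\Mm^-$ so that the resulting $H$-function matches the weight function $f(K,I)$ exactly. Once that match holds, the third step is forced by gradings.
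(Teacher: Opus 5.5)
The paper does not prove this statement; it quotes it from Ozsv\'ath--Stipsicz--Szab\'o, so your outline has to stand on its own. Its shape is reasonable: a single tower $\F[[U]]$ at every vertex of the cube, $\D^1$-components that are nonzero powers of $U$, and exponents read off from gradings. However, two ingredients are wrong and the decisive one is missing.

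First, the indexing is reversed. $\D^1$ enlarges $M$, while N\'emethi's differential shrinks $E$. Moreover, the vertex complex $\mathfrak{A}^-(\mathcal{H}^{L\smallsetminus M},\psi^M(s))$ is the Floer complex of $L\smallsetminus M$, and its invariants must match $g([K,E])$, which involves only the vertices of $E$. So you need $E\leftrightarrow L\smallsetminus M$.

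Second, ``iterated connected sum of Hopf links, so an L-space link'' is not a valid inference: connected sums of L-space links are generally not L-space links ($T_{2,3}\#T_{2,3}$ is not an L-space knot). The rank-one claim is nevertheless true here, for a different reason. A large surgery on a forest of Hopf-clasped unknots is, up to orientation, the boundary of a negative-definite plumbing with no bad vertices, hence an L-space. The large surgery theorem then gives $\F[[U]]$ for every $s$.

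The genuine gap is the grading computation, which carries all the content. The $H$-function is additive under split union, but not under connected sum along a component. The merged component has a single coordinate in $\mathbb{H}$, so at best one gets a minimum over the ways of splitting that coordinate between the summands (already for knots). The closed form you plan to write down therefore does not exist as described. The identity you actually need, $H_{L\smallsetminus M}(\psi^M(s))=-g([K,E])$ under the affine correspondence (up to the normalization in \eqref{lh1}), is only announced as an expectation. Without it you know that $\D^1$ is a sum of powers of $U$, but not that the exponents are N\'emethi's $a_v$ and $b_v$, and that is the proposition. It has to be proved, for instance via the $d$-invariants of those large surgeries (rational plumbings) and the large-surgery grading shift. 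Alternatively, induct over the tree using the tensor-product model of connected sum, checking at each step that the resulting minimum equals $\min_{I\subseteq E}f(K,I)$.

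Also, for the flip maps the exponent is not a difference of $H$-values alone. $\Phi^{-K_i}$ exchanges the roles of $w_i$ and $z_i$, which shifts the Maslov grading by an amount linear in $s_i$. On the lattice side this is the change of the $K^2/4$ term in \eqref{lh1} under $K\mapsto K+2v^*$. That change is exactly what turns $g([K+2v^*,E\smallsetminus v])-g([K,E])$ into $b_v$.
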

                \noindent See the discussion surrounding~\eqref{ls6} for the notation used. The second chain complex from Proposition~\ref{isom3} is the $E_1$-term of the spectral sequence of~\cite{OSSspec}. The authors of~\cite{OSSspec} then show that
                \begin{proposition}\label{isom4}
                    [Proposition 4.8 from~\cite{OSSspec}] When $\s$ is a torsion spin$^c$ structure on $Y$, the identification from Proposition~\ref{isom3} respects absolute Maslov gradings. 
                \end{proposition}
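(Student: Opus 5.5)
The plan is to show that the identification of Proposition~\ref{isom3} matches gradings on one family of generators, and then to propagate that agreement to every generator using the fact that both differentials are homogeneous of the same degree. Fix a torsion $\s$ and a generator $[K,E]$ of $\CFl^-(Y;\s)$. Write $\Phi$ for the isomorphism of Proposition~\ref{isom3}, so $\Phi([K,E])$ is a class in $H_*(\Cc_\Lambda(\Hh),\D^0)$ lying at the cube vertex $\epsilon$ with $\epsilon_i=1$ exactly for $v_i\in E$. The gradings on the link surgery side are those of Remark~\ref{ls8}: we pull back the grading of $\HFh^-(Y;\s)$ through Theorem~\ref{ls7}. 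The claim is then $\gr(\Phi[K,E])=\gr([K,E])$ for all $K$ and $E$. I will argue by induction on $|E|$.

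\emph{Base case $E=\varnothing$.} At the vertex $\epsilon=0$ the relevant summand is $\Mm^-(\Hh^L,s)$, a generalized Floer complex of a diagram for $S^3$. Its $D^0$-homology is a copy of $\F[[U]]$ whose generator has a Maslov grading in $S^3$ computable from the Alexander filtration. The map of Theorem~\ref{ls7} restricted to this corner is the triangle-counting cobordism map for $W$ in the spin$^c$ structure $\mathfrak t$ with $c_1(\mathfrak t)=K$. By \eqref{hee2} it shifts grading by $(K^2-2\chi(W)-3\sigma(W))/4$. I would check that $\Phi$ sends $[K,\varnothing]$ to the top generator of this $\F[[U]]$, with the $\mathbb H(L)$-coordinate $s$ determined by $K$ via $K=2s+\Lambda$-type bookkeeping. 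Then $\gr(\Phi[K,\varnothing])=(K^2+s)/4$, which agrees with Lemma~\ref{lh2}. The points to verify are that the top generator has grading $0$ before the shift, in the normalization of Remark~\ref{ls8}, and that $n_z$ contributes nothing. This is where the Alexander-grading lower bounds on the $U_j$-exponents enter.

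\emph{Inductive step.} Both differentials have degree $-1$. For $\D^1$ this holds because it is assembled from cobordism maps whose shifts are given by \eqref{hee2}, and the isomorphism of Theorem~\ref{ls7} is graded. For the lattice differential it is built into \eqref{lh1}. The lattice differential of $[K,E]$ contains, for each $v\in E$, a term $U^{a_v}[K,E\smallsetminus v]$ with nonzero coefficient. Here $a_v$ is the difference between $g([K,E])$ and $g([K,E\smallsetminus v])$, or the analogous difference for $[K+2v^*,E\smallsetminus v]$. Since $\Phi$ is a chain isomorphism, $\D^1\Phi[K,E]$ contains $U^{a_v}\Phi[K,E\smallsetminus v]$ with nonzero coefficient. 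Homogeneity then gives
\[
\gr(\Phi[K,E]) = \gr(\Phi[K,E\smallsetminus v]) - 2a_v + 1.
\]
The same identity holds for $\gr([K,E])$ by \eqref{lh1}. The inductive hypothesis therefore yields equality at $[K,E]$. Gradings extend to $U^i$-multiples in the same way on both sides, so the result holds on all generators.

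\emph{Main obstacle.} I expect the hardest part to be justifying that $\D^1$, viewed on $H_*(\Cc_\Lambda,\D^0)$ with the pulled-back grading of Remark~\ref{ls8}, is homogeneous of degree exactly $-1$. This requires comparing the cobordism-map grading shifts at adjacent cube vertices with the $|E|$ term and the $U$-exponents coming from the Alexander filtration. My first attempt would be to use the fact that the maps $\Phi^{\{v\}}$ are inclusion and flip maps of generalized Floer complexes, whose grading behavior is explicit in \cite{MO}. I would then check, for a single Hopf-linked pair of unknots, that the shift matches $2f(K,I)$ changing by $K(v)+v\cdot v+2\sum_{v'\in I}v\cdot v'$.
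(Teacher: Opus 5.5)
\textbf{The gap: the lattice generators sit on the cube the wrong way round, and this breaks your base case.}

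The lattice boundary sends $[K,E]$ to terms indexed by $E\smallsetminus v$. By contrast, $\mathcal{D}^1$ goes from the summand of $L\smallsetminus M$ to that of $L\smallsetminus(M\cup N)$, so it only deletes more $z$-basepoints. Hence $E$ must correspond to the sublink $L\smallsetminus M$ that still carries its $z$-basepoints, not to $M$.

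With your assignment, $\Phi[K,\varnothing]$ would be the generator of $H_*(\mathfrak{A}^-(\mathcal{H}^L,s))$ at $\epsilon=0$. But $\mathcal{D}^1$ of that class is a nonzero combination of $U$-power multiples of generators at the adjacent vertices (the inclusion and flip maps), while $\partial[K,\varnothing]=0$. So no chain isomorphism can put $[K,\varnothing]$ there.

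The grading fact you planned to check at that corner is also false. The Alexander-grading lower bounds on the exponents push the generator of $H_*(\mathfrak{A}^-(\mathcal{H}^L,s))\cong\mathbb{F}[[U]]$ down to Maslov grading $-2H_L(s)$, where $H_L$ is the $H$-function of the plumbing link. Already for a single $m$-framed unknot the generator is $U^{\max(0,-s)}x$, which has grading $2s<0$ when $s<0$. The $\epsilon=0$ corner actually carries $[K,V]$, and the $-2H_L(s)$ is what the term $2g([K,V])$ in \eqref{lh1} records. Run at $\epsilon=0$, your base case would give a value contradicting Lemma~\ref{lh2}.

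\textbf{The grading on the $E_1$-page must be defined at chain level.} You cannot pull it back through Theorem~\ref{ls7}. The group $H_*(\mathcal{C}_\Lambda,\mathcal{D}^0)$ is the $E_1$-page, with one $\mathbb{F}[[U]]$ summand for every pair $(K,E)$; it is not $\mathit{HF}^-(Y)$. The surgery theorem of \cite{MO} uses the total differential $\mathcal{D}$.

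What you need is the chain-level absolute grading on $\mathcal{C}_\Lambda$ referred to in \cite[Remark 9.7]{MO}. It is the Maslov grading of $\mathfrak{A}^-(\mathcal{H}^{L\smallsetminus M},\psi^M(s))$, shifted by $(c_1(\mathfrak{t}_s)^2-2\chi(W)-3\sigma(W))/4$ and by a term depending only on the vertex. In this grading $\mathcal{D}$ is homogeneous of degree $-1$. Since $\mathcal{D}^r$ changes the cube coordinate by $r$, each $\mathcal{D}^r$ is then homogeneous of degree $-1$, and so is the induced $\mathcal{D}^1$ on the $E_1$-page. This disposes of your ``main obstacle'' with no Hopf-link computation.

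\textbf{How to repair the argument.} Reverse the correspondence, so the base case moves to $\epsilon=(1,\dots,1)$, where all $z$'s are deleted. There the summand is a Floer complex of $S^3$ whose homology $\mathbb{F}[[U]]$ sits in Maslov grading $0$. The grading $(K^2+|V|)/4$, for $c_1(\mathfrak{t}_s)=K$, is then just the normalization of that grading, not a triangle count.

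Your recursion $\operatorname{gr}\Phi[K,E]=\operatorname{gr}\Phi[K,E\smallsetminus v]-2a_v+1$, run upward in $|E|$, then proves the statement. It holds even if $\Phi[K,E]$ is only a generator up to a unit of $\mathbb{F}[[U]]$, by comparing $U$-adic valuations. The paper gives no argument of its own for this statement; it simply cites \cite{OSSspec}.
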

                \noindent These two results (together with a portion of the proof of Zemke's result from~\cite{Zem21}) are all that we need to verify Theorem~\ref{main}. 
                
                \begin{remark}\label{isom5}
                    \emph{Propositions~\ref{isom3} and~\ref{isom4} do not prove that the full lattice complex is identified with the full Heegaard Floer complex. The main result of~\cite{OSSspec} provides a spectral sequence, the $E_1$-term of which we considered in the propositions above, rather than an overall isomorphism. The full isomorphism was proved somewhat later by Zemke, in~\cite{Zem21}:}
                    \begin{theorem}\label{isom2}
                        [Theorem 1.1 of~\cite{Zem21}] If $Y$ is a plumbed 3-manifold then there is an isomorphism of $\F[[U]]$-modules
                        \[  
                          \HFl^-(Y) \cong \HFh^{-}(Y)
                        \]
                        respecting the splitting over spin$^c$-structures, and when $b_1(Y) = 0$, this isomorphism is relatively graded.
                    \end{theorem}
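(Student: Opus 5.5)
The statement is Zemke's theorem. I would prove it by showing that the Manolescu--Ozsv\'ath link surgery complex $\Cc_{\Lambda}(L)$ of the plumbing link is homotopy equivalent, as a filtered $\F[[U]]$-complex, to the complex $(H_*(\Cc_{\Lambda}(\Hh),\D^0),\D^1)$. That complex is the lattice complex $\CFl^-(Y)$ by Proposition~\ref{isom3}. Theorem~\ref{ls7} identifies the homology of $\Cc_{\Lambda}(L)$ with $\HFh^-(Y)$. So it suffices to show that, after homological perturbation, the length-$\geq 2$ hypercube maps can be made to vanish. Equivalently, the spectral sequence of~\cite{OSSspec} collapses at $E_2$ with no extension problems, and this should already hold at the chain level.

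The key steps, in order, are as follows.

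\textbf{Step 1.} Reduce to a local computation. Since $G$ is a tree and each $K_i$ is an unknot, $L$ is built by iterated connected sums and cablings out of Hopf links. I would use a bordered (or ``type-D'') reformulation of the surgery hypercube. In it, the complex for $L$ is a box tensor product of a small number of building blocks: the surgery module for a framed solid torus, the Hopf link bimodule, and a ``merge'' or connected-sum bimodule gluing along a vertex. Each block is a hypercube over $\F[[U_i,\ldots]]$ with explicit generators.

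\textbf{Step 2.} Compute each building block explicitly. For the unknot and the Hopf link the knot Floer complexes are tiny: one generator for the unknot and four for the Hopf link in a genus-zero diagram. The $\D^0$-homology in each Alexander grading is therefore a single copy of $\F[[U]]$. The $\D^1$ maps are multiplications by monomials $U^{a}$, and the exponents match the lattice weights $f(K,I)$.

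\textbf{Step 3.} Show that each block is homotopy equivalent to a ``formal'' model, one whose hypercube maps of length $\geq 2$ are zero. Then show that the tensor product of formal models is again formal. The grading from Remark~\ref{ls8} and Proposition~\ref{isom4} constrains the possible higher maps. Each $\D^0$-homology group is a rank-one free module, so a length-$r$ map is a monomial of a prescribed grading. Because $\D^2=0$, such maps can be removed by a filtered change of basis, inductively on $r$.

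\textbf{Step 4.} Assemble. The resulting formal model equals $(E_1,\D^1)=\CFl^-(Y)$. The equivalences preserve the splitting over $\mathbb{H}(L)$, and hence over spin$^c$ structures via~\eqref{plu1}. They are $\F[[U]]$-linear and, when $b_1(Y)=0$, relatively graded. Taking homology gives $\HFl^-(Y)\cong\HFh^-(Y)$.

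The main obstacle is Step 3, specifically the vanishing, up to homotopy, of the length-$\geq 2$ maps in the tensored hypercube. There the diagonal maps come from counts of holomorphic polygons between different Alexander-grading pieces. I would first try a pure grading and rank argument: in a fixed spin$^c$ structure and multi-Alexander grading, a length-$r$ map must shift the Maslov grading by $r-1$. Combined with the explicit monomial form of $\D^1$, this should force any higher map to factor through $\D^1$, which lets it be absorbed by a homotopy. If the argument fails in general, I would fall back on a direct bordered computation of the Hopf link bimodule, showing its structure maps are exactly the lattice ones.
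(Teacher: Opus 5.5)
\textbf{There is a genuine gap: Step 3 is the entire content of the theorem, and the argument you give for it does not work.} The paper does not reprove this result. It quotes Zemke, and describes his argument only as building an auxiliary complex $\tilde{\Cc}_{\Lambda}(L)$ on the generators of $\Cc_{\Lambda}(L)$ and showing it is homotopy equivalent to $\CFl^-$, using the identification of $(H_*(\Cc_{\Lambda}(\Hh),\D^0),\D^1)$ with the lattice complex from~\cite{OSSspec}. Your Steps 1, 2 and 4 fit that outline, but they are the routine part.

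\textbf{What your degree count actually gives.} For $K' = K + 2My$ in the same spin$^c$ class, $\frac14\bigl((K')^2 - K^2\bigr) = K\cdot y + y^2$ is even because $K$ is characteristic. So $\gr([K,E]) - |E|$ has constant parity on the class. A length-$r$ map has internal degree $r-1$, so it vanishes on $E_1$ whenever $r$ is even. For odd $r \geq 3$ the count gives nothing. A length-three term $[K,E]\mapsto U^b\,[K,E\smallsetminus\{u,v,w\}]$ has the right degree with $b = g([K,E\smallsetminus\{u,v,w\}]) - g([K,E]) - 1$, which is nonnegative as soon as these minimal $G$-weights differ.

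\textbf{The removal step confuses a cocycle with a coboundary.} You claim that ``because $\D^2=0$, such maps can be removed by a filtered change of basis.'' Suppose $\D^2,\dots,\D^{r-1}$ already vanish on $E_1$. Then the structure equation only gives $\D^1\D^r + \D^r\D^1 = 0$. Removing $\D^r$ by conjugating with $\mathrm{id} + H$ requires $\D^r = \D^1 H + H\D^1$ for some length-$(r-1)$ map $H$, and that is precisely what must be proved. On a three-dimensional cube there is not even a cocycle condition on $\D^3$. Nothing about $\D^1$ being monomial forces such a factorization.

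\textbf{Formal blocks do not give $(E_1,\D^1)$.} Even if the assembled hypercube had no maps of length $\geq 2$, computing $E_1 = H_*(\,\cdot\,,\D^0)$ by homological perturbation produces transferred terms $p\,\D^1 h\,\D^1\cdots h\,\D^1 i$, where $(i,p,h)$ contracts $\D^0$. These are the chain-level source of the higher differentials of the spectral sequence of~\cite{OSSspec}. They are exactly what has to be shown to vanish; the blocks' formality does not dispose of them.

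\textbf{What is missing.} You need chain-level control extracted from the explicit Hopf-link and connected-sum models. For instance, it would suffice to find cycles generating the rank-one homology at every vertex that span a sub-hypercube with two properties: it is preserved by all the hypercube maps, and only length-one maps act on it. Its inclusion is then a filtered quasi-isomorphism, and the sub-hypercube is a copy of $(E_1,\D^1)\cong\CFl^-$. Your fallback of computing the Hopf link bimodule supplies input for such an argument, but not the argument itself.

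\textbf{Smaller points.} No cabling is involved, since a tree's plumbing link is an iterated connected sum of Hopf links. Collapse at $E_2$ is weaker than the chain-level statement, not equivalent to it, because it leaves the $\F[[U]]$-extension problem open. Finally, the theorem also covers $b_1(Y)>0$, where non-torsion spin$^c$ structures have only cyclic relative gradings, so a degree count is weaker still there.
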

                \end{remark}

                It now remains to discuss the portion of Zemke's result, Theorem~\ref{isom2}, which is necessary for the proof of Theorem~\ref{main}. Let all notation be as above. In~\cite{Zem21}, Zemke constructs an auxiliary complex $\tilde{\Cc}_{\Lambda}(L)$, which has the same $\F[[U]]$-generators as the link surgery complex $\Cc_{\Lambda}(L)$, but different operations. He then shows that:
                \begin{proposition}\label{isom6}
                    [Proposition 6.3 of~\cite{Zem21}] The chain complexes $\tilde{\Cc}_{\Lambda}(L)$ is chain homotopy equivalent to $\CFl^-(G)$ over $\F[[U]]$.
                \end{proposition}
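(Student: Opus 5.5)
The plan is to compare the two complexes generator by generator and then control the higher hypercube maps. Since every component of the plumbing link $L$ is an unknot, the hypercube $\tilde{\Cc}_{\Lambda}(L)$ can be modelled as a tensor product. There is one factor for each vertex, the surgery complex of an unknot, and one factor for each edge $[v_i,v_j]\in E$, a Hopf-link clasp bimodule. This is the description Zemke obtains from his connected-sum and pairing theorems for link surgery hypercubes. In this model each summand $\Mm^-(\Hh^{L\smallsetminus M},\psi^M(s))$ is a free $\F[[U]]$-module of rank one. Its generator is $U^{a}\cdot \x$ for a single intersection point $\x$, with the exponent $a$ determined by $s$ and the Alexander multi-grading of $\x$.

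The first step is to set up the bijection on generators. A pair $(s,M)$ with $s\in\mathbb{H}(L)$ and $M\subseteq L$ should correspond to $[K,E]$, where $E$ is the set of vertices of $M$. The vector $K$ is obtained from $s$ by the standard affine identification $K = 2s + (\text{a fixed vector determined by } \Lambda \text{ and } \mathrm{lk})$, landing in $\vec{m}+2\Z^s = \Char(X)$. One must check that this is a bijection. One must also check that the quotient by $2M\Z^s$ matches the spin$^c$ decomposition~\eqref{plu1}, which is the usual relation between $\mathbb{H}(L)$ modulo the surgery lattice and $\spin^c(Y)$.

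The second step is to match the length-one maps. For an unknot component the inclusion map $v$ and the flip map $h$ are multiplication by $U^{\max(0,s_i)}$ and $U^{\max(0,-s_i)}$, up to the identification $s\mapsto s+\Lambda_i$. The lattice differential is
\[
\partial[K,E]=\sum_{v\in E}\bigl(U^{a_v}[K,E\smallsetminus v]+U^{b_v}[K+2v^*,E\smallsetminus v]\bigr),
\]
where $a_v$ and $b_v$ are differences of the minimal weights $g$. One shows, via an inductive computation of $f(K,I)$, that $a_v$ and $b_v$ coincide with the exponents coming from $v$ and $h$. The $U$-powers along the Hopf clasps should be absorbed by the term $\sum_{v,v'\in I}v\cdot v'$ in $2f$.

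The main obstacle is the set of maps of length $\geq 2$ coming from the Hopf-link clasps, which have no counterpart in $\CFl^-$. For these I would first try a degree argument. Each summand is rank one, and both complexes carry compatible relative gradings, namely the Maslov grading on one side and $\gr$ from~\eqref{lh1} on the other. A length-$r$ map would have to have a specific $U$-exponent. I would show that this exponent is either forced to be negative, so the map vanishes, or that the map can be removed by a filtered change of basis, using homological perturbation on the cube filtration. If this fails, the fallback is to use the tree structure of $G$: induct on the number of vertices by splitting off a leaf, using the tensor-product description. At each stage the new higher maps involve only one clasp, and they are null-homotopic by the explicit Hopf-link model computation. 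The resulting filtered homotopy equivalences then assemble into the asserted chain homotopy equivalence over $\F[[U]]$.
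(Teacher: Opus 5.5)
Your overall skeleton matches the route the paper indicates: match generators and length-one maps with the lattice complex, then dispose of the higher hypercube maps. The paper does not prove the statement itself. It cites Zemke, who combines the identification of $\CFl^-$ with $(H_*(\Cc_\Lambda,\D^0),\D^1)$ from Proposition~\ref{isom3} with a homotopy equivalence between $H_*(\Cc_\Lambda,\D^0)$ and $\tilde{\Cc}_\Lambda(L)$. Your proposal has a genuine gap exactly at the hard step.

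\textbf{The rank-one claim.} You say each summand $\Mm^-(\Hh^{L\smallsetminus M},\psi^M(s))$ is free of rank one, generated by a single $U^a\cdot\x$. This is false at the chain level. Every summand uses the intersection points of the one diagram $\Hh^L$, and for an $\ell$-component link there are at least $\operatorname{rank}\widehat{\mathit{HFL}}(L)\ge 2^{\ell-1}$ of them. Rank one holds only after taking $\D^0$-homology, because sublinks of a plumbing link are L-space links; this is the input of Proposition~\ref{isom3}. Matching the length-one exponents on multi-component summands also needs the $H$-functions of those sublinks, not just the unknot maps $U^{\max(0,\pm s_i)}$. Passing to this rank-one model by homological perturbation produces higher maps $D^r$, $r\ge 2$, on arbitrary subcubes, not only at the clasps. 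Removing them is the whole content of the statement.

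\textbf{Why the degree argument fails.} Write $J$ also for $\sum_{v\in J}v$ and set $K_J=K+2(J,\cdot)$. Then $\tfrac{K_J^2-K^2}{4}=2f(K,J)$ (squares computed with $M^{-1}$), and $f(K_J,I')=f(K,I'\cup J)-f(K,J)$ for $I'\cap J=\varnothing$. Now take a homogeneous component of $D^r$ from $[K,E]$ to $U^c\cdot[K_J,E']$, where $E=E'\sqcup N$, $|N|=r$ and $J\subseteq N$. By \eqref{lh1} it must satisfy
\[
c=\Bigl(\min_{I'\subseteq E'}f(K,I'\cup J)-\min_{I\subseteq E}f(K,I)\Bigr)-\frac{r-1}{2}.
\]
The bracket is a non-negative integer. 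So parity does kill even $r$. For $r=3$, however, you only get $c\ge -1$, and $c\ge 0$ occurs whenever the bracket is positive. Nothing forces $D^3=0$.

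\textbf{The missing obstruction computation.} With $D^0=D^2=0$, the structure relation reads $D^1D^3+D^3D^1=0$. Eliminating $D^3$ by a change of basis $\mathrm{id}+H^2$ requires $D^3=D^1H^2+H^2D^1$. That is the vanishing of an obstruction class, and your outline never computes it.

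\textbf{The fallback induction.} It simply asserts this vanishing (``null-homotopic by the explicit Hopf-link model computation''). Its premise that the new higher maps involve only one clasp is also unjustified. When a leaf is attached to $v$, the new higher maps come from composing the clasp data with the existing structure maps in the $v$-direction, and those involve the rest of the tree.

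Showing that these transferred higher maps can be eliminated is exactly what Zemke's Proposition~6.3 supplies. As it stands, your proposal does not prove the statement.
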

                To prove this, Zemke uses the identification from~\cite{OSSspec} (Proposition~\ref{isom3}), which matches generators of $\CFl^-(Y)$ to generators of $H_*(\Cc_{\Lambda}(\Hh), \D^0)$. He then constructs a homotopy equivalence between $H_*(\Cc_{\Lambda}(\Hh), \D^0)$ and the original complex $\tilde{\Cc}_{\Lambda}(L)$. The key facts we need from Zemke's proof of Proposition~\ref{isom6} are the following: 
                \begin{lemma}\label{isom7}
                    \begin{enumerate}[label = (\roman*)]
                        \item The elements of $\Cc_{\Lambda}(L)$ which descend to generators of $\HFh^-(Y)$ on homology (according to the isomorphism of Theorem~\eqref{ls7}) are identified with the elements of $\CFl^-(Y)$ which descend to generators on homology.

                        \item The maps involved in this identification are grading-preserving.
                    \end{enumerate}
                \end{lemma}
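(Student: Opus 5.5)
The plan is to extract both parts of Lemma~\ref{isom7} from the structure of Zemke's proof of Proposition~\ref{isom6}. That proof is a composition of three identifications, and the task is to follow generators and gradings through each of them. The three identifications are as follows.
\begin{enumerate}[label = (\alph*)]
\item The isomorphism of Theorem~\ref{ls7} between $H_*(\Cc_{\Lambda}(\Hh,\s),\D^0)$ and $\HFh^-(Y;\s)$.
\item The Ozsv\'ath--Stipsicz--Szab\'o isomorphism of Proposition~\ref{isom3} between $(H_*(\Cc_{\Lambda}(\Hh)),\D^1)$ and $\CFl^-(Y)$.
\item Zemke's homotopy equivalence between $(H_*(\Cc_{\Lambda}(\Hh),\D^0),\D^1)$ with its higher operations and $\tilde{\Cc}_{\Lambda}(L)$.
\end{enumerate}
The auxiliary complex $\tilde{\Cc}_{\Lambda}(L)$ is in turn homotopy equivalent to $\Cc_{\Lambda}(L)$. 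The first step is to record that every map in this chain is an $\F[[U]]$-module chain map inducing an isomorphism on homology. This holds because the maps are homotopy equivalences or isomorphisms, and by Theorem~\ref{isom2} the composite realizes $\HFl^-(Y)\cong\HFh^-(Y)$. Consequently, a cycle in $\Cc_{\Lambda}(L)$ representing a generator of $\HFh^-(Y;\s)$ is carried to a cycle representing a generator of $\HFl^-(Y;\s)$, and conversely. This is part~(i); it is formal once the maps are known to be quasi-isomorphisms respecting the splitting over spin$^c$-structures.

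For part~(ii), I would check gradings map by map. For (a), Remark~\ref{ls8} says we may \emph{define} the grading on $H_*(\Cc_{\Lambda},\D^0)$ by transport from $\HFh^-(Y;\s)$, so this step is grading-preserving by construction. For (b), grading preservation is exactly Proposition~\ref{isom4}. For (c), I would use the fact that Zemke's homotopy equivalence is produced by homological perturbation. The perturbation lemma builds the new differential and the comparison maps from sums of compositions of $\D^r$'s, the inclusion/projection maps, and a chain homotopy $h$. So it suffices to show each ingredient is homogeneous of the right degree: each $\D^r$ has degree $-1$, $h$ has degree $+1$, and the inclusion and projection have degree $0$, all with respect to the absolute grading. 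Each term in the perturbation series then has degree $0$. The $\D^r$ are counts of holomorphic polygons in the Maslov-index-appropriate dimension, so the degree bookkeeping reduces to the relative grading statement, which Theorem~\ref{isom2} already gives when $b_1(Y)=0$.

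The main obstacle is upgrading from \emph{relatively} graded to \emph{absolutely} graded in step (c). Theorem~\ref{isom2} only asserts relative gradings, so an overall constant shift per spin$^c$-structure must be ruled out. My approach is to pin down that constant by comparing a single element. The perturbation series starts with the identity term, so the comparison map agrees with the inclusion $H_*(\Cc_{\Lambda},\D^0)\hookrightarrow\tilde{\Cc}_{\Lambda}$ on the $E=\varnothing$ corner of the hypercube, up to terms of strictly higher filtration $|E|$. Hence on the class of some $[K,\varnothing]$ the gradings agree. Its grading is given by Lemma~\ref{lh2} on the lattice side and matched via Propositions~\ref{isom3} and~\ref{isom4}. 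A constant shift that vanishes on one homogeneous element vanishes everywhere, which gives~(ii).
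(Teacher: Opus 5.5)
There is a genuine gap in part~(ii), and it sits in step~(a), not in step~(c). Part~(i) is fine as a formal consequence of the maps being quasi-isomorphisms, provided (a) is read as the Manolescu--Ozsv\'ath isomorphism for the \emph{full} differential $\D$.

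You make (a) grading-preserving ``by construction'' by transporting a grading from $\HFh^-(Y;\s)$ to $H_*(\Cc_{\Lambda}(\Hh),\D^0)$. That transport does not exist. $H_*(\Cc_{\Lambda}(\Hh),\D^0)$ is the $E_1$ page. By Proposition~\ref{isom3} it is isomorphic as an $\F[[U]]$-module to the lattice chain module $\CFl^-(Y)$, which has infinitely many generators $[K,E]$ in each spin$^c$ structure, whereas $\HFh^-(Y;\s)$ is finitely generated. The result of~\cite{MO} behind Theorem~\ref{ls7} identifies $\HFh^-(Y;\s)$ with the homology of $\Cc_{\Lambda}$ under the full differential $\D$.

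Transporting $\grt$ to $H_*(\Cc_{\Lambda},\D)$ instead would not rescue the argument, because your steps (b) and (c) use a chain-level grading:
\begin{itemize}
\item Proposition~\ref{isom4} is a statement about a grading on $E_1$ itself. By the previous paragraph, that grading can only be the one induced from an absolute grading on the complex $\Cc_{\Lambda}(L)$.
\item Your pinning argument on $[K,\varnothing]$ needs the inclusion $H_*(\Cc_{\Lambda},\D^0)\hookrightarrow\tilde{\Cc}_{\Lambda}$ to have degree $0$, which is automatic only for that induced grading.
\end{itemize}

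So what is missing is a theorem, not a convention. For torsion $\s$, the Manolescu--Ozsv\'ath quasi-isomorphism must carry this chain-level absolute grading on $\Cc_{\Lambda}(L)$ to $\grt$ on $\HFh^-(Y;\s)$. That is Remark~9.7 of~\cite{MO}, i.e.\ the first viewpoint in Remark~\ref{ls8}, with the shift~\eqref{hee2}. It is exactly the input the paper outsources to Section~6.3 of~\cite{Zem21}. Without it, nothing in your chain ties the grading on $\tilde{\Cc}_{\Lambda}$ or $\Cc_{\Lambda}$ to $\grt$, which is the whole content of (ii).

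Relatedly, the obstacle you single out in (c) is not a real one. Suppose $\D^r$, $h$, and the inclusion and projection are homogeneous of degrees $-1$, $+1$, $0$, $0$ for the chain-level absolute grading. Then every term of the perturbation series automatically has degree $0$, and no constant shift is left to pin down. Your leading-term comparison on $[K,\varnothing]$ is therefore correct but redundant.

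On the other hand, deriving that homogeneity from Theorem~\ref{isom2} is circular, since Zemke's relative-grading statement is itself deduced from these very maps. Homogeneity has to come from how Manolescu--Ozsv\'ath grade the link surgery complex, so that $\D$ has degree $-1$.
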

                \noindent (Part (ii) actually appears as part of the proof of Theorem~\ref{isom2} for $b_1(Y) = 0$, appearing in Section 6.3 of~\cite{Zem21}; however, it follows from Zemke's proof of Proposition~\ref{isom6}.) 

    \section{Proof of the main result}\label{ver}

        In this section, we give a proof of Theorem~\ref{main}. This proof consists of a detailed analysis of the grading shifts in the isomorphisms from Section~\ref{isom}.

        \begin{proof}
            Let $[k]$ be a spin$^c$ structure of the negative definite plumbed 3-manifold $Y$, which we are assuming is a rational homology sphere. The goal here is to show that the maximal grading of a torsion-free element of $\HFh^-(Y; [k])$ is 
            \begin{equation}\label{ver1}
                \max_{K \in [k]} \frac{K^2 + s}{4}
            \end{equation}
            where all the $K$ are characteristic vectors of $X$, and as usual, $s$ is the number of vertices of the plumbing graph for $Y$. 

            First, look at $\HFl^-(Y; [k])$. There is a single non-torsion module $\F[[U]]$ in $\HFl^-(Y; [k])$, which lives in $\delta$-filtration zero. (See e.g. the discussion following the statement of Theorem 2.8 in~\cite{OSSknot} for a proof of this fact.) This means that non-torsion generators of $\HFl^-(Y,[k])$ will all be of the form $[K, \varnothing]$ where $K \in \Char(X)$ is a representative of $[k]$. By Lemma~\ref{lh2}, the maximal degree of a non-torsion element of $\HFl^-(Y; [k])$ is as in~\eqref{ver1}.

            As noted in Lemma~\ref{isom7}, the combination of Proposition~\ref{isom3} and Proposition~\ref{isom6} gives us an identification between elements of $\C_{\Lambda}(L, [k])$ which descend to generators of $\HFh^-(Y; [k])$ on homology, and elements of $\CFl^-(Y; [k])$ which descend to generators of $\HFh^-(Y; [k])$ on homology. This identification respects spin$^c$ structures and preserves absolute gradings. In particular, the induced map on homology identifies non-torsion elements of $\HFh^-(Y; [k])$ with non-torsion elements of $\HFl^-(Y; [k])$ in a grading preserving way. 

            This means that any maximal degree non-torsion element of $\HFh^-(Y; [k])$ maps to a maximal degree non-torsion element of $\HFl^-(Y; [k])$, and has the same grading. This means that the main result now follows from Lemma~\ref{lh2} and the previous paragraphs.
        \end{proof}

\bibliography{dBib}
\bibliographystyle{plain}

\end{document}